\def\negskp{\unskip\kern -2pt\unskip}
\newtheorem{theorem}{Theorem}
\newtheorem{lemma}{Lemma}
\numberwithin{conjecture}{section}
\numberwithin{theorem}{section}
\numberwithin{lemma}{section}
\numberwithin{corollary}{section}
\numberwithin{proposition}{section}
\numberwithin{equation}{section}
\begin{document}

\thispagestyle{empty}
\title[A general rational solution \dots]
{A general rational solution of an equation associated with perfect cuboids.}
\author{John R. Ramsden}
\address{CSR Plc,
\newline\hphantom{iii}\, Cambridge Business Park,
\newline\hphantom{iii}\, CAMBRIDGE,
\newline\hphantom{iii}\, CB4 0WZ, UK}
\email{John.Ramsden@csr.com
\newline\hphantom{iii}\kern 7.2em jhnrmsdn@yahoo.co.uk
}
\thanks{\copyright \ 2012 John R Ramsden}
\thanks{\it 2012-07-22}
\maketitle 
{
\small
\begin{quote}
\noindent{\bf 
We show by finding an explicit parametrization that a 4th degree surface
which arises as a necessary condition for the existence of a perfect cuboid
is a rational surface, i.\,e. birationally equivalent over $\mathbb{Q}$ to
a plane.}\par
\medskip
\noindent{\bf Keywords:}  {perfect cuboids, rational parametrization.}
\end{quote}
}
\section{Introduction}\label{sec:intro}
A recent paper, \cite{1}, contains, among other interesting results, a
necessary condition for the existence of perfect cuboids. This condition
comprises rational solutions to an equation (labeled (7.3) in this paper)
which, with a slight change of notation, can be represented as follows:
\begin{equation}\label{eq:1}
\hskip -2em
\begin{gathered}
4\,x^2 + (y^2 + 1 - z^2)^2 = 8\,y^2.
\end{gathered}
\end{equation}
The purpose of the present note is to show that this surface is birationally
equivalent over $\mathbb{Q}$ to a plane. We show this by deriving an explicit
parametrization, which defines every point of the surface except those on 8
lines it contains defined by $x = \pm y = \pm z \pm 1$ for 3 independent
sign options.
\section{Results}\label{sec:results}
We start with a couple of lemmas, which form a part of a ``toolkit'' for finding
parametrizations, where these exist, by elementary techniques of the kind used
in the present note.
\begin{lemma}\label{lemma1}
Let $x$, $y$, $z$ be rational numbers with $1 + x^2 = y^2 + z^2$ and $x \not= z$.
Then there exist rational numbers $a$, $b$ such that
\begin{xalignat*}{3}
&x=\frac{a\,b + 1}{a - b},
&&y=\frac{a + b}{a - b},
&&z=\frac{a\,b - 1}{a - b}.
\end{xalignat*}
\end{lemma}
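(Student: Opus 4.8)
The plan is to produce the required rationals $a,b$ explicitly by inverting the proposed formulas, and then to verify the three identities by direct substitution, using the hypothesis only in the rearranged form $(x-z)(x+z)=y^2-1$.

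First I would see what the candidate parametrization forces. From the three displayed formulas one computes the simple combinations $x-z=2/(a-b)$, $y+1=2a/(a-b)$, and $y-1=2b/(a-b)$. Taking ratios, this suggests the definitions $a=(y+1)/(x-z)$ and $b=(y-1)/(x-z)$. Since $x,y,z\in\mathbb{Q}$ and $x\neq z$, both $a$ and $b$ are well-defined rational numbers, and moreover $a-b=2/(x-z)\neq 0$, so every denominator $a-b$ appearing in the statement is nonzero.

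Next I would check the three identities. The relation $y=(a+b)/(a-b)$ is immediate from $a+b=2y/(x-z)$ together with $a-b=2/(x-z)$, and needs no hypothesis. For the other two, compute $ab=(y+1)(y-1)/(x-z)^2=(y^2-1)/(x-z)^2$; here the hypothesis $1+x^2=y^2+z^2$ enters, in the form $y^2-1=(x-z)(x+z)$, which gives $ab=(x+z)/(x-z)$. Then $ab+1=2x/(x-z)$ and $ab-1=2z/(x-z)$, and dividing each of these by $a-b=2/(x-z)$ yields $x$ and $z$ respectively, completing the verification.

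The computations are entirely routine, so there is essentially no obstacle once one has guessed the correct inversion, which is in any case dictated by the target formulas. The only point deserving a word of care is the non-vanishing of $a-b$, and this is precisely where the hypothesis $x\neq z$ is used — which also explains why that hypothesis has to be imposed in the first place.
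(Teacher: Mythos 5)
Your proof is correct and follows the same overall strategy as the paper: invert the target formulas to obtain explicit candidates for $a$ and $b$, then verify the three identities using the hypothesis in the rearranged form $(x-z)(x+z)=(y+1)(y-1)$. The one substantive difference is your choice $b=(y-1)/(x-z)$, where the paper takes $b=(x+z)/(y+1)$. The two agree whenever both are defined, precisely because of that identity, but the paper's $b$ is undefined when $y=-1$, which can genuinely occur under the stated hypotheses (for instance $(x,y,z)=(1,-1,-1)$ satisfies $1+x^2=y^2+z^2$ with $x\neq z$). Your version requires only $x\neq z$, so it actually repairs an edge case that the paper's proof silently omits. No gaps.
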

\begin{proof} The numbers $a$ and $b$ defined as follows must be rational:
\begin{xalignat*}{2}
&a=\frac{y + 1}{x - z},
&&b=\frac{x + z}{y + 1}.
\end{xalignat*}
Then, expressing the equation of the lemma in the form
\begin{equation*}
(x + z)(x - z) = (y + 1)(y - 1),
\end{equation*}
the preceding equations defining $a$ and $b$ imply
\begin{equation*}
y = \frac{a + b}{a - b},
\end{equation*}
whence
\begin{align*}
  &x-z=\frac{y + 1}{a}=\frac{2}{a - b},\\[1ex]
  &x+z=b\,(y + 1)=\frac{2\,a\,b}{a - b}
\end{align*}
and the result follows.
\end{proof}
\begin{lemma}\label{lemma2} Let $x$, $y$ be rational numbers such that
$x^2 - y^2 = a$ for some non-zero $a$. Then there exists some rational
number $t$ for which
\begin{xalignat*}{2}
&x=\frac{t^2+a}{2\,t},
&&y=\frac{t^2-a}{2\,t}.
\end{xalignat*}
\end{lemma}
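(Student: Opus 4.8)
The plan is to exploit the factorization $x^2 - y^2 = (x-y)(x+y)$, which is exactly what makes the hypothesis $x^2 - y^2 = a$ amenable to a one-parameter description. The natural candidate for the parameter is $t := x + y$, since then the hypothesis reads $(x - y)\,t = a$.

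First I would observe that $t = x + y$ is rational and, crucially, nonzero: if $x + y$ were $0$ then $x^2 - y^2$ would vanish, contradicting $a \neq 0$. This legitimizes dividing by $t$, so that $x - y = a/t$. Next I would solve the linear system in $x$ and $y$: adding $x + y = t$ and $x - y = a/t$ gives $2x = t + a/t$, and subtracting gives $2y = t - a/t$. Clearing the inner denominator yields
\begin{xalignat*}{2}
&x=\frac{t^2+a}{2\,t},
&&y=\frac{t^2-a}{2\,t},
\end{xalignat*}
which is precisely the asserted form, with $t$ rational. A one-line verification that these expressions indeed satisfy $x^2 - y^2 = a$ closes the argument, though strictly it is already forced by the construction.

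There is essentially no hard step here; the only point requiring care is the nonvanishing of $t$, which is why the hypothesis $a \neq 0$ is included. (One could alternatively take $t = x - y$, which is likewise nonzero and leads to the same parametrization after relabeling, but the choice $t = x+y$ matches the stated formulas directly.)
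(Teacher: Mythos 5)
Your proof is correct and is essentially the paper's own argument: the paper likewise sets $x+y=t$, $x-y=a/t$ and adds/subtracts, merely asserting that such a non-zero rational $t$ exists. Your explicit remark that $t\neq 0$ follows from $a\neq 0$ is a small but welcome clarification of the same route.
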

\begin{proof}
The proof is immediate, by adding and subtracting the following, which
must clearly hold for some non-zero rational $t$:
\begin{xalignat*}{2}
&x+y=t,
&&x-y=\frac{a}{t}.
\end{xalignat*}
\end{proof}
\par
We now state and prove our main theorem.
\begin{theorem}\label{theof2.1}
Every rational point on the surface defined by \eqref{eq:1} is either on one
of the 8 lines defined by $x = \pm y = \pm z \pm 1$ with 3 independent sign
options, or else there exists a pair of rational numbers $b$, $c$ with
\begin{align*}
&x=- \frac{b\,(c^2+2-4\,c)}{b^2\,c^2+2\,b^2-3\,b^2\,c+c-b\,c^2\,+2\,b},\\[1ex]
&y=- \frac{b(c^2+2-2\,c)}{b^2\,c^2+2\,b^2-3\,b^2\,c+c-b\,c^2+2\,b},\\[1ex]
&z=- \frac{b^2\,c^2+2\,b^2-3\,b^2\,c\,-c}{b^2\,c^2+2\,b^2-3\,b^2\,c+c-b\,c^2+2\,b}.
\end{align*}
Conversely this set satisfies \eqref{eq:1} identically, and we have
\begin{xalignat*}{2}
&b=\frac{y - x}{u - z - 1},\\[1ex]
&c=\frac{4\,x\,(y-x)}{(x-u)(y+x)+(u+3\,x)(y-x)},
\end{xalignat*}
where $u$ is defined by
\begin{equation}\label{eq:2}
\hskip -2em
y^2+1-z^2=2\,u.
\end{equation}
\end{theorem}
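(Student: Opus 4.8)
The plan is to eliminate $z$ in favour of the auxiliary variable $u$ of \eqref{eq:2}. Substituting $y^2+1-z^2=2u$ into \eqref{eq:1} and dividing by $4$, the surface is cut out, in the four coordinates $x,y,z,u$, by the pair of equations
\[
x^2+u^2=2y^2 ,\qquad y^2-z^2=2u-1 ,
\]
and since $u$ is recovered from $(x,y,z)$ by \eqref{eq:2}, it suffices to parametrize this pair. The first relation is a quadric cone, which I would treat with Lemma~\ref{lemma1}; the second is then a single conic in the remaining freedom, which I would treat with Lemma~\ref{lemma2}.

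First I would check that off the eight lines one has $x\neq0$ and $u\neq y$: indeed $x=0$ forces $y=0$ (otherwise $\sqrt2$ would be rational) and then $z=\pm1$, while $u=y$ forces $x=\pm y$ and $z=\pm(y-1)$, and in either case the point lies on a line $x=\pm y=\pm z\pm1$. Dividing $x^2+u^2=2y^2$ by $x^2$ rewrites it as $1+(u/x)^2=(y/x)^2+(y/x)^2$, to which Lemma~\ref{lemma1} applies (its hypothesis being exactly $u\neq y$); it produces rationals $a,b$ with $u/x=(ab+1)/(a-b)$ and $y/x=(a+b)/(a-b)=(ab-1)/(a-b)$. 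Equating the two expressions for $y/x$ gives the single relation $(a-1)(b-1)=2$, so $x,y,u$ are determined, up to the overall scale $x$, by the one parameter $b$.

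Feeding these into $y^2-z^2=2u-1$ and regarding $2u-1$ momentarily as a constant, Lemma~\ref{lemma2} supplies a rational $t$ (necessarily $t=y+z$) with $y=(t^2+2u-1)/(2t)$ and $z=(t^2-2u+1)/(2t)$. Since the preceding step expresses $u/y$ as an explicit rational function of $b$, the equation $2ty=t^2+2u-1$ becomes linear in $y$ and solves to give $y$ — and hence all of $x,y,z,u$ — as explicit rational functions of $b$ and $t$. This is already a rational parametrization of the surface, but with a spurious exceptional locus; the remaining task is a Möbius change in each of the two parameters, after which the formulas take the stated shape with the parameters renamed $b,c$. I would pin this change down by computing the inverse on the surface: one finds that $c$ is essentially the cone ``slope'' parameter, recoverable as $c=4x(y-x)/[(x-u)(y+x)+(u+3x)(y-x)]$, which simplifies to $2(y-x)/(2y-x-u)$, and that $b$ is essentially the earlier $t$, recoverable as $b=(y-x)/(u-z-1)$; one then checks that these rational functions and the parametrization are two-sided inverses, the bad locus being precisely the union of the eight lines. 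The converse — that the displayed triple satisfies \eqref{eq:1} identically — is then a finite computation, best organised around the common denominator $D=b^2c^2+2b^2-3b^2c+c-bc^2+2b$ of $x,y,z$.

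I expect the genuine difficulty to lie entirely in the bookkeeping. At the Möbius step one must choose the base points and normalisations so that the two substitutions conspire to collapse the bulky intermediate rational functions into the compact stated form rather than something unwieldy. And for the ``either $\dots$ or'' dichotomy one must disentangle the full exceptional locus: pieces such as $t=0$ or $2u=1$ are artefacts of the intermediate parametrization and are swept back in by the coordinate change, whereas the genuinely exceptional pieces — where a denominator of the displayed formulas, or of the inversion formulas for $b$ and $c$, vanishes (so $x=0$, $u=y$, $2y=x+u$, or $u=z+1$) — must be shown to be exactly the eight lines $x=\pm y=\pm z\pm1$, so that no rational point off those lines escapes the parametrization.
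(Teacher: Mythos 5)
Your reduction to the pair $x^2+u^2=2y^2$, $y^2-z^2=2u-1$ is sound, and your preliminary checks (that $x=0$ or $u=y$ forces the point onto the eight lines) are correct; but you then deploy the two lemmas in the opposite way from the paper. The paper adds \eqref{eq:2} to $x^2+u^2=2y^2$ to get $x^2+(u-1)^2=y^2+z^2$, applies Lemma~\ref{lemma1} to \emph{that} equation (after dividing by $x^2$) to obtain a genuine two-parameter family $(a,b)$, and then uses Lemma~\ref{lemma2} on the residual conic condition $U^2(a-b)^2=(a+3b)^2-8b^2$ to trade $a$ for $c$; back-substitution then lands directly on the displayed formulas. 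You instead apply Lemma~\ref{lemma1} degenerately to the conic $1+U^2=Y^2+Y^2$ (your relation $(a-1)(b-1)=2$ is right, so this is a one-parameter parametrization of that conic) and Lemma~\ref{lemma2} to $y^2-z^2=2u-1$ to introduce the second parameter $t=y+z$. Both decompositions do rationally parametrize the surface. What the paper's ordering buys is that the stated formulas and their inverses fall out of the construction: its $b=(X+Z)/(Y+1)$ equals $(y-x)/(u-z-1)$ by \eqref{eq:4}, and its $c=\tfrac{1}{2}\bigl[(a/b+3)+U(a/b-1)\bigr]=(2y-x+u)/(y-x)$ agrees on the surface with your simplification $2(y-x)/(2y-x-u)$, which you computed correctly. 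Your route produces a different $(b,t)$-parametrization that still has to be converted into the stated one.

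That conversion is the one real loose end. You assert it is ``a M\"obius change in each of the two parameters'', but the relation between your $(b,t)$ and the theorem's $(b,c)$ need not split as a product of M\"obius maps in separate variables (at best it is triangular: the theorem's $b$ involves $z$ and hence your $t$, with coefficients depending on your first parameter). Your fallback --- define $b$ and $c$ directly by the stated inversion formulas and verify by computation that they and the displayed triple are two-sided inverses away from the eight lines --- does prove the theorem, and it also disposes of the artefact loci $2u=1$ and $t=0$ of your intermediate parametrization (one checks, as you anticipate, that such points, e.g.\ $(1/14,-5/14,5/14)$, are recovered by the final formulas). But once you take that route the $(b,t)$ construction is only motivation and the proof is the verification itself, which you have not carried out. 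So either perform that verification explicitly, or reorder the derivation as the paper does so that the stated formulas emerge directly from the back-substitution.
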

\begin{proof}
Substituting $u$ defined by \eqref{eq:2} into \eqref{eq:1} gives
\begin{equation}\label{eq:3}
\hskip -2em
\begin{gathered}
  x^2 + u^2 = 2\,y^2
\end{gathered}
\end{equation}
and adding \eqref{eq:2} to this gives
\begin{equation}\label{eq:4}
\hskip -2em
\begin{gathered}
  x^2 + (u - 1)^2 = y^2 + z^2.
\end{gathered}
\end{equation}
In \eqref{eq:3} and \eqref{eq:4}, dividing throughout by $x^2$ and taking
\begin{align*}
&U=\frac{u}{x}, \\[1ex]
&X=U-\frac{1}{x}, \\[1ex]
&Y=\frac{y}{x}, \\[1ex]
&Z=\frac{z}{x},
\end{align*}
this pair becomes
\begin{align}
&1+U^2=2\,Y^2,     \label{eq:6-1} \\
&1+X^2=Y^2+Z^2.    \label{eq:6-2}
\end{align}
By Lemma~\ref{lemma1}, \eqref{eq:6-2} implies either $X = Z$ or the
existence of rational $a$ and $b$ with
\begin{align}
&X=\frac{a\,b+1}{a-b}, \notag \\
&Y=\frac{a+b}{a-b},   \label{eq:7-2} \\
&Z=\frac{a\,b-1}{a-b}.\notag
\end{align}
The equality $X=Z$ leads to the 8 lines. Otherwise, plugging into \eqref{eq:6-1}
the expression for $Y$ given by \eqref{eq:7-2} gives
\begin{equation}\label{eq:8}
U^2\,(a - b)^2=(a+3\,b)^2-8\,b^2
\end{equation}
and by Lemma~\ref{lemma2} (in effect) this implies rational $c$ with
\begin{align}
&U\left(\frac{a}{b}-1\right)=\frac{c^2-2}{c},\notag\\
&\frac{a}{b}+3=\frac{c^2+2}{c}.     \label{eq:9-2}
\end{align}
The formula \eqref{eq:9-2} expresses $a$ as a rational function of $b$ and $c$,
and propagating this and the expression for $U$ up the preceding definitions
gives the result stated.
\end{proof}


\begin{thebibliography}{1}
\bibitem{1} Sharipov~R.~A., {\it A biquadratic Diophantine equation associated 
with perfect cuboids}, e-print {\bf arXiv:1207.4081} in electronic archive 
http:/\negskp/arXiv.org.
\end{thebibliography}
\end{document}